\numberwithin{equation}{section}
\numberwithin{figure}{section}
\theoremstyle{plain}
\newtheorem{thm}{\protect\theoremname}
  \theoremstyle{plain}
  \newtheorem{prop}[thm]{\protect\propositionname}
  \theoremstyle{plain}
  \newtheorem{lem}[thm]{\protect\lemmaname}
  \providecommand{\lemmaname}{Lemma}
  \providecommand{\propositionname}{Proposition}
\providecommand{\theoremname}{Theorem}
\begin{document}

\title{a periodic solution of period two of a delay differential equation}

\author{Yukihiko Nakata}

\address{Department of Mathematics, Shimane University, 1060 Nishikawatsu-cho,
Matsue, Shimane, Japan}
\begin{abstract}
In this paper we prove that the following delay differential equation
\[
\frac{d}{dt}x(t)=rx(t)\left(1-\int_{0}^{1}x(t-s)ds\right),
\]
has a periodic solution of period two for $r>\frac{\pi^{2}}{2}$ (when
the steady state, $x=1$, is unstable). In order to find the periodic
solution, we study an integrable system of ordinary differential equations,
following the idea by Kaplan and Yorke \cite{Kaplan=000026Yorke:1974}.
The periodic solution is expressed in terms of the Jacobi elliptic
functions. 
\end{abstract}

\keywords{Elliptic integrals, Jacobi elliptic functions, Delay differential
equation, Periodic solution, Hopf bifurcation, Integrable ordinary
differential equations,}
\maketitle

\section{Introduction }

The delay differential equation 
\begin{equation}
\frac{d}{dt}z(t)=f\left(z(t-1)\right),\label{eq:dde_discrete}
\end{equation}
where $f:\mathbb{R}\to\mathbb{R}$ is a continuous function, has been
extensively studied in the literature. For a special case, $f(z)=r\left(1-e^{z}\right),\ r>0,$
the equation (\ref{eq:dde_discrete}) is referred to as the Wright
equation, named after the paper \cite{Wright:1955}. Jones investigated
the existence of a periodic solution of the Wright equation in \cite{Jones:1962,Jones:1962-1}
by the fixed-point theorem. Nussbaum then established a general fixed-point
theorem and study the existence of periodic solutions for a class
of functional differential equations in \cite{Nussbaum:1974}. See
also \cite{Lessard:2010,Jaquette:prep} and references therein for
the recent progress by a computer assisted approach.

Assuming that $f$ is an odd function, in the paper \cite{Kaplan=000026Yorke:1974},
Kaplan and Yorke constructed a periodic solution of the equation (\ref{eq:dde_discrete})
via a Hamiltonian system of ordinary differential equations. The idea
is used to investigate a periodic solution of the equation (\ref{eq:dde_discrete})
with a particular nonlinear function $f$ in \cite{Dormayer:1986}
and for a system of differential equations with distributed delay
in \cite{Azevedo:2007}. We refer the readers to the survey paper
\cite{Walther:2014} and the references therein. See also Chapter
XV of \cite{Diekmann:1995}. In this paper we follow the approach
by Kaplan and Yorke \cite{Kaplan=000026Yorke:1974}: we find a periodic
solution of a differential equation with distributed delay, considering
a system of ordinary differential equations.

The following mathematical model for a single species population is
known as the Hutchinson equation and as a delayed logistic equation
\begin{equation}
\frac{d}{dt}x(t)=rx(t)\left(1-x\left(t-1\right)\right).\label{eq:Huteq}
\end{equation}
The equation (\ref{eq:Huteq}) can be derived from the Wright equation
by the transformation $z(t)=\ln x(t)$. Many extension of the Hutchinson
equation (\ref{eq:Huteq}) has been investigated, see \cite{Gyori:2000,Ruan:2006,Gopalsamy:1992}
and references therein. Nevertheless, the Hutchinson-Wright equation
still poses a mathematical challenges \cite{vandenBerg:2017}.

In this paper we study the existence of a periodic solution of the
following delay differential equation
\begin{equation}
\frac{d}{dt}x(t)=rx(t)\left(1-\int_{0}^{1}x(t-s)ds\right),\label{eq:dde1}
\end{equation}
where $r$ is a positive parameter, $r>0$. The delay differential
equation (\ref{eq:dde1}) can be seen as a variant of the Hutchinson
equation (\ref{eq:Huteq}). The author's motivation to study (\ref{eq:dde1})
is that the equation appears as a limiting case of an infectious disease
model with temporary immunity (see Appendix). For the equation (\ref{eq:dde1}),
the existence of a periodic solutions does not seem to be well understood.
The periodicity, which may explain the recurrent disease dynamics,
is a trigger of this study. Differently from the discrete delay case,
the distributed delay is an obstacle, when one tries to construct
a suitable Poincare map to find a periodic solution, but see \cite{Kennedy:2014,Walther:1975,Walther:1977}.

In this paper we prove the following theorem. 
\begin{thm}
\label{thm:main}Let $r>\frac{\pi^{2}}{2}$. Then the delay differential
equation (\ref{eq:dde1}) has a periodic solution of period $2$,
i.e., $x(t)=x(t-2),\ t\in\mathbb{R}$, satisfying
\[
x(t)x(t-1)=\text{Const},\ \int_{0}^{2}x(t-s)ds=2
\]
 for any $t\in\mathbb{R}$\@.
\end{thm}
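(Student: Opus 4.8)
The plan is to follow Kaplan and Yorke \cite{Kaplan=000026Yorke:1974}: look for a period‑$2$ solution of (\ref{eq:dde1}) on which $x(t)x(t-1)$ is constant, show such a solution is governed by an autonomous second‑order ODE — a ``hyperbolic pendulum'' — integrate that equation explicitly by Jacobi elliptic functions, and reduce the closing condition to a single scalar equation whose solvability is controlled exactly by the threshold $r=\pi^{2}/2$.

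First I would carry out the reduction. Suppose $x>0$ has period $2$, $x(t)x(t-1)=K$ for a constant $K$, and $\int_{0}^{2}x(t-s)\,ds=2$; the last condition says $\int_{t-2}^{t}x=2$, hence $\int_{t-2}^{t-1}x=2-\int_{t-1}^{t}x$. Writing $y=\ln x$ and $w(t)=\int_{t-1}^{t}x(\sigma)\,d\sigma$, (\ref{eq:dde1}) reads $\dot y(t)=r\bigl(1-w(t)\bigr)$, while the same equation at $t-1$ gives $\dot y(t-1)=r\bigl(w(t)-1\bigr)$; adding, $\tfrac{d}{dt}\bigl(y(t)+y(t-1)\bigr)=0$, consistent with $y(t)+y(t-1)=\ln K$. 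Differentiating $\dot y=r(1-w)$ and using $\dot w=x(t)-x(t-1)=e^{y(t)}-Ke^{-y(t)}$ yields $\ddot y=-r\bigl(e^{y}-Ke^{-y}\bigr)$, and after the shift $z:=y-\tfrac12\ln K$ this becomes
\[
\ddot z=-\lambda\sinh z,\qquad \lambda:=2r\sqrt K,
\]
while $y(t)+y(t-1)=\ln K$ forces the symmetry $z(t-1)=-z(t)$; in particular $z$ is $2$‑periodic.

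Then I would go backwards. The pendulum has first integral $\tfrac12\dot z^{2}+\lambda\cosh z=\lambda\cosh A$; each amplitude $A>0$ gives one periodic orbit, and the substitution $s=\sinh(z/2)$ (so $\cosh z=1+2s^{2}$, $a:=\sinh(A/2)$) turns it into $\dot s^{2}=\lambda(a^{2}-s^{2})(1+s^{2})$, solved by a Jacobi elliptic function of modulus $k=\tanh(A/2)\in(0,1)$, $k'=1/\cosh(A/2)$, with least period $P=4k'K(k)/\sqrt{\lambda}$; since the vector field is odd, the half‑period shift equals $(z,\dot z)\mapsto(-z,-\dot z)$, so $z(t+\tfrac P2)=-z(t)$ automatically. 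I fix $\lambda$ by $P=2$, i.e. $\lambda=4k'^{2}K(k)^{2}$; then $z(t-1)=-z(t)$. Next I evaluate, along the orbit, $\int_{0}^{1}\cosh z\,d\sigma=\dfrac{2E(k)}{k'^{2}K(k)}-1$ in terms of complete elliptic integrals, and fix $K$ by $\sqrt K\int_{0}^{1}\cosh z\,d\sigma=1$. With $x:=\sqrt K\,e^{z}$ one then has $x(t)x(t-1)=Ke^{z(t)+z(t-1)}=K$, $\int_{0}^{2}x=\sqrt K\cdot2\int_{0}^{1}\cosh z\,d\sigma=2$, and $\dot x=x\dot z$, so $x$ solves (\ref{eq:dde1}) iff $\Delta(t):=\int_{t-1}^{t}x-1+\dot z(t)/r\equiv0$; but $\dot\Delta=\bigl(x(t)-x(t-1)\bigr)+\ddot z/r=2\sqrt K\sinh z-\lambda\sinh z/r$, which vanishes exactly when $\lambda=2r\sqrt K$, and $\int_{0}^{2}\Delta=\int_{0}^{2}x-2=0$; hence $\Delta\equiv0$ once $\lambda=2r\sqrt K$. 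Inserting $\lambda=4k'^{2}K(k)^{2}$ and $\sqrt K=\bigl(\tfrac{2E(k)}{k'^{2}K(k)}-1\bigr)^{-1}$, this last requirement collapses to the single scalar equation
\[
g(k):=2K(k)\bigl[\,2E(k)-(1-k^{2})K(k)\,\bigr]=r,\qquad k\in(0,1).
\]
From $K(k),E(k)=\tfrac{\pi}{2}\bigl(1\pm\tfrac{k^{2}}{4}+\cdots\bigr)$ one gets $g(0^{+})=\pi^{2}/2$, while $K(k)\to\infty$, $(1-k^{2})K(k)\to0$ and $E(k)\to1$ as $k\to1^{-}$ give $g(k)\to\infty$; $g$ being continuous on $(0,1)$, the intermediate value theorem supplies, for each $r>\pi^{2}/2$, a modulus $k^{\ast}\in(0,1)$ with $g(k^{\ast})=r$, and running the construction with $k^{\ast}$ produces the desired solution, proving Theorem \ref{thm:main}.

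The main obstacle I anticipate is less any single inequality than the coordination of the reduction — keeping straight that among the three scalars $K$, $A$ (equivalently $k$) and $\lambda$ one is free and two are forced, and recognizing that $\int_{0}^{2}x(t-s)\,ds=2$ is precisely the hidden solvability condition making $x=\sqrt K\,e^{z}$ satisfy (\ref{eq:dde1}), not an extra demand — together with the exact evaluation of $P$ and of $\int_{0}^{1}\cosh z$ in terms of $K(k),E(k)$ and the endpoint asymptotics $g(0^{+})=\pi^{2}/2$, $g(1^{-})=\infty$. If one additionally wanted uniqueness of the period‑$2$ solution, one would need $g$ strictly increasing on $(0,1)$, a genuinely delicate monotonicity estimate on elliptic integrals; but for the existence statement of Theorem \ref{thm:main} the intermediate value theorem suffices.
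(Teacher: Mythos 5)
Your construction is correct and lands on exactly the paper's scalar equation $r=2K(k)\left(2E(k)-(1-k^{2})K(k)\right)$ (the function $L$ in (\ref{eq:Leq})), but it gets there by a genuinely different reduction. The paper never takes logarithms: it encodes the putative period-$2$ solution as the four-dimensional system (\ref{eq:fourdim}) in $x(t)$, $x(t-1)$ and the two delayed integrals, extracts the invariants (\ref{eq:inv}), reduces to the planar system (\ref{eq:ODE3}), and uses the conserved quantity of Proposition \ref{prop:Conservation} to close the Duffing equation (\ref{eq:DuffEq}) for the integral variable $y(t)=\int_{t-1}^{t}x-1$, solved by $y=\alpha\,\text{sn}(\beta t,k)$ and then exponentiated to get $x$ as $(\text{dn}\pm k\,\text{cn})^{2}$. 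You instead set $z=\ln x-\tfrac12\ln K$ and obtain the hyperbolic pendulum $\ddot z=-\lambda\sinh z$; since $\dot z=\dot x/x=-ry$, your equation and the paper's Duffing equation are the same integrable structure viewed one derivative apart, your modulus $\tanh(A/2)$ coincides with the paper's $(\sqrt a-\sqrt b)/(\sqrt a+\sqrt b)$ with $K=ab$, $\lambda=2r\sqrt{ab}$, and your two closing conditions ($P=2$ and $\sqrt K\int_{0}^{1}\cosh z=1$) are precisely (\ref{eq:1stCondition})--(\ref{eq:2ndCondition}). Your $\Delta(t)$ device for verifying that the constructed $x$ really satisfies (\ref{eq:dde1}) is a clean repackaging of the ``integral constant equals $-1$'' step in Theorem \ref{thm:twocond}; the only step you should spell out there is the Fubini/periodicity identity $\int_{0}^{2}\bigl(\int_{t-1}^{t}x\bigr)dt=\int_{0}^{2}x$, which is what turns $\int_{0}^{2}\Delta$ into $\int_{0}^{2}x-2$. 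The one substantive divergence is at the end: you invoke the intermediate value theorem on $g$, which indeed suffices for the bare existence asserted in Theorem \ref{thm:main}, whereas the paper proves in Lemma \ref{lem:L} that $L$ is strictly increasing and so obtains a well-defined $k=L^{-1}(r)$ and the explicit parametrization (\ref{eq:ab}) of $a$ and $b$. That monotonicity is less delicate than you fear --- it is a two-line computation, $\frac{d}{dk}\left(2E(k)-(1-k^{2})K(k)\right)=(1-k^{2})K'(k)>0$, using the standard derivative formulas for $K$ and $E$ --- so you could upgrade your IVT step essentially for free.
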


In the main text we provide more information of the periodic solution.
We prove the existence of the periodic solution, solving a corresponding
ordinary differential equation, which turns out to be equivalent to
the Duffing equation. The periodic solution, explicitly expressed
in terms of the Jacobi elliptic functions, appears at $r=\frac{\pi^{2}}{2}$,
as the positive equilibrium ($x=1$) loses stability via Hopf bifurcation.

This paper is organized as follows. In Section 2, we first study stability
of the positive equilibrium, applying the principle of linearized
stability. We then derive a system of ordinary differential equations
(\ref{eq:fourdim}) that generates the periodic solution of the original
delay differential equation (\ref{eq:dde1}). In Section 3, the system
of ordinary differential equations (\ref{eq:fourdim}) is reduced
to a scalar differential equation (\ref{eq:DuffEq}) that turns out
to be the Duffing equation. The equation is explicitly solved using
the Jacobi elliptic functions. In Section 4, we consider a fixed point
problem to find a parameter such that the period of the solution becomes
two.

\section{Preliminary}

For the delay differential equation (\ref{eq:dde1}) the natural phase
space is $C=C\left(\left[-1,0\right],\mathbb{R}\right)$ equipped
with the supremum norm (\cite{Diekmann:1995,Hale:1993}). We consider
the following initial condition 
\[
x(\theta)=\phi\left(\theta\right),\ \theta\in\left[-1,0\right]
\]
where $\phi\in C$ with $\phi(0)>0$, so that positive solutions are
generated.

Observe that (\ref{eq:dde1}) is equivalent to the following system
of delay differential equations \begin{subequations}\label{eq:twodim}
\begin{align}
\frac{d}{dt}x(t) & =-rx(t)y(t),\label{eq:twodim1}\\
\frac{d}{dt}y(t) & =x(t)-x(t-1)\label{eq:twodim2}
\end{align}
\end{subequations}with the following initial condition
\begin{align*}
x(\theta) & =\phi\left(\theta\right),\ \theta\in\left[-1,0\right]\\
y(0) & =\int_{0}^{1}\phi(-s)ds-1.
\end{align*}

It is easy to see that $x=1$ is the unique positive equilibrium for
the equation (\ref{eq:dde1}). One obtains the following result for
stability of the positive equilibrium (see also Theorem 4.1 of \cite{Oliveira:1985}).
\begin{prop}
The positive equilibrium $x=1$ is asymptotically stable for $0<r<\frac{\pi^{2}}{2}$
and unstable for $r>\frac{\pi^{2}}{2}$. Hopf bifurcation occurs at
$r=\frac{\pi^{2}}{2}$ and a periodic solution appears. 
\end{prop}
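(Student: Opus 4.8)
The plan is to carry out a standard linearized-stability / Hopf-bifurcation analysis via the characteristic equation at the equilibrium $x=1$. First I would linearize \eqref{eq:dde1} around $x=1$: writing $x(t)=1+u(t)$ and discarding quadratic terms, one finds $\dot u(t)=-r\int_{0}^{1}u(t-s)\,ds$. Substituting the ansatz $u(t)=e^{\lambda t}$ yields the characteristic equation
\begin{equation}
\lambda=-r\int_{0}^{1}e^{-\lambda s}\,ds=-r\,\frac{1-e^{-\lambda}}{\lambda},\label{eq:charplan}
\end{equation}
equivalently $\lambda^{2}+r\bigl(1-e^{-\lambda}\bigr)=0$ (note $\lambda=0$ is not a root since it would force $r=0$). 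By the principle of linearized stability for delay equations (see \cite{Diekmann:1995,Hale:1993}), asymptotic stability of $x=1$ is equivalent to all roots of \eqref{eq:charplan} having negative real part, and instability follows once a root with positive real part exists.

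The key steps are then: (i) show that for all $r>0$ there are no real nonnegative roots (immediate from $\lambda^{2}+r(1-e^{-\lambda})>0$ for $\lambda\ge 0$, $\lambda\neq 0$), so any instability must enter through a pair of complex roots crossing the imaginary axis; (ii) locate the purely imaginary roots $\lambda=i\omega$, $\omega>0$, by separating real and imaginary parts of $-\omega^{2}+r(1-\cos\omega)+ir\sin\omega=0$, giving $\sin\omega=0$ and $-\omega^{2}+r(1-\cos\omega)=0$; from $\sin\omega=0$ with $\omega>0$ the smallest admissible value is $\omega=\pi$ (the value $\omega=2\pi$ etc.\ would require larger $r$), and then $-\pi^{2}+2r=0$, i.e.\ $r=\frac{\pi^{2}}{2}$; (iii) verify the transversality condition $\frac{d}{dr}\operatorname{Re}\lambda\big|_{r=\pi^{2}/2}\neq 0$ by implicitly differentiating \eqref{eq:charplan}; (iv) invoke a root-counting / continuity argument to conclude that for $0<r<\frac{\pi^{2}}{2}$ all roots lie strictly in the left half-plane (stability) while for $r>\frac{\pi^{2}}{2}$ at least one conjugate pair has crossed into the right half-plane (instability); (v) apply the Hopf bifurcation theorem for delay equations to obtain the bifurcating periodic solution at $r=\frac{\pi^{2}}{2}$.

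For step (iv) I would argue as follows. For small $r>0$, a perturbation argument (or the fact that $\lambda^{2}+r(1-e^{-\lambda})=0$ with $r\to 0^{+}$ forces roots to escape to infinity in the left half-plane while the "principal" roots approach the stable roots of a limiting problem) shows stability holds; alternatively one checks directly that no imaginary-axis crossing can occur for $r<\frac{\pi^{2}}{2}$ since the crossing analysis in (ii) pins the first crossing exactly at $r=\frac{\pi^{2}}{2}$, and roots depend continuously on $r$, so the number of right-half-plane roots is constant on $(0,\frac{\pi^{2}}{2})$ and equals its value near $r=0$, namely zero. Once a root crosses at $r=\frac{\pi^{2}}{2}$ with positive transversal speed, instability for $r>\frac{\pi^{2}}{2}$ is immediate. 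This matches Theorem 4.1 of \cite{Oliveira:1985}, which I would cite for the stability half.

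I expect the main obstacle to be making step (iv) fully rigorous — specifically, justifying that the equilibrium is stable for \emph{all} $r\in(0,\frac{\pi^{2}}{2})$ rather than merely ruling out imaginary-axis roots. The cleanest route is the continuity-of-roots argument combined with a clean accounting of roots near $r=0$; a more self-contained alternative is to apply a suitable frequency-domain criterion (e.g.\ an argument-principle / Nyquist-type count applied to the entire function $\lambda\mapsto\lambda^{2}+r(1-e^{-\lambda})$) to show the right-half-plane root count is zero below the threshold. The transversality computation in (iii) and the final appeal to the Hopf theorem are routine once the crossing at $\omega=\pi$, $r=\frac{\pi^{2}}{2}$ is established.
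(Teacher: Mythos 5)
Your proposal follows essentially the same route as the paper: the same characteristic equation (you merely clear the denominator to get $\lambda^{2}+r(1-e^{-\lambda})=0$), purely imaginary roots located at $\omega=(2n+1)\pi$ with $r=\frac{1}{2}\left((2n+1)\pi\right)^{2}$, transversality by implicit differentiation, a continuity-of-roots argument anchored at small $r$ (the paper anchors it via the a priori bound $|\lambda|\le r$ for roots with $\operatorname{Re}\lambda>0$), and the Hopf bifurcation theorem. The only quibble is your parenthetical that $\omega=2\pi$ ``would require larger $r$'': even multiples of $\pi$ are excluded outright, since $\cos\omega=1$ forces $-\omega^{2}=0$ there for every $r$.
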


\begin{proof}
We deduce the following characteristic equation (\cite{Diekmann:1995,Hale:1993})
\begin{equation}
\lambda=-r\int_{0}^{1}e^{-\lambda s}ds,\ \lambda\in\mathbb{C}.\label{eq:cheq}
\end{equation}
Let $\lambda=\mu+i\omega,\ (\mu,\omega\in\mathbb{R})$ to obtain the
following two equations\begin{subequations}\label{eq:ch}
\begin{align}
\mu & =-r\int_{0}^{1}e^{-\mu s}\cos\left(\omega s\right)ds,\label{eq:ch1}\\
\omega & =r\int_{0}^{1}e^{-\mu s}\sin\left(\omega s\right)ds.\label{eq:ch2}
\end{align}
\end{subequations}First one sees that if $\text{Re}\lambda>0$ then
\begin{equation}
\left|\lambda\right|=\sqrt{\mu^{2}+\omega^{2}}\leq r\label{eq:estlam}
\end{equation}
Assume that there is a root in the right half complex plane (i.e.,
$\mu>0$) for sufficiently small $r>0$. One sees $\int_{0}^{1}e^{-\mu s}\cos\left(\omega s\right)ds>0$
from the estimation (\ref{eq:estlam}), thus, if $r>0$ is sufficiently
small, from (\ref{eq:ch1}) all roots of the characteristic equation
(\ref{eq:cheq}) are in the left half complex plane. 

Suppose now that for some $r>0$ purely imaginary roots exist. Substituting
$\mu=0$ into the equation (\ref{eq:ch1}), one sees that for $r=\frac{1}{2}\left(\left(2n+1\right)\pi\right)^{2}$
the characteristic equation (\ref{eq:cheq}) has purely imaginary
roots $\lambda=\pm i\omega=\pm i\left(2n+1\right)\pi$ for $n=0,1,2,\dots.$.
We show that, for $n=0,1,2,\dots,$ the purely imaginary roots $\lambda=\pm i\left(2n+1\right)\pi$
cross the imaginary axis transversally from left to right as $r$
increases in the neighborhood of $r=\frac{1}{2}\left(\left(2n+1\right)\pi\right)^{2}$.
Applying the implicit function theorem to the equation (\ref{eq:cheq}),
one has 
\[
\lambda^{\prime}(r)\left(1-r\int_{0}^{1}se^{-\lambda s}ds\right)+\int_{0}^{1}e^{-\lambda s}ds=0.
\]
One sees that 
\begin{align*}
\int_{0}^{1}se^{-\lambda s}ds & =-\frac{e^{-\lambda}}{\lambda}+\frac{1}{\lambda}\int_{0}^{1}e^{-\lambda s}ds=-\frac{e^{-\lambda}}{\lambda}-\frac{1}{r},\\
\int_{0}^{1}e^{-\lambda s}ds & =-\frac{\lambda}{r}.
\end{align*}
Therefore, at $r=\frac{1}{2}\left(\left(2n+1\right)\pi\right)^{2}$
and $\lambda=i\omega=i\left(2n+1\right)\pi$, it follows that
\[
\lambda^{\prime}\left(r\right)\left|_{r=\frac{1}{2}\left(\left(2n+1\right)\pi\right)^{2}}\right.=\frac{i\frac{\omega}{r}}{1+1+i\frac{r}{\omega}}\implies\text{Re}\lambda^{\prime}(r)\left|_{r=\frac{1}{2}\left(\left(2n+1\right)\pi\right)^{2}}\right.=\frac{1}{4+\left(\frac{r}{\omega}\right)^{2}}>0.
\]
From the Hopf bifurcation theorem, we obtain the conclusion.
\end{proof}
Thus a periodic solution of period $2$ emerges at $r=\frac{\pi^{2}}{2}$
and the positive equilibrium is unstable for $r>\frac{\pi^{2}}{2}$.

Now assume that there exists a periodic solution of period $2$ for
(\ref{eq:dde1}). Denote by $x^{*}(t)$ the periodic solution of (\ref{eq:dde1}),
i.e., $x^{*}(t)=x^{*}(t-2)$. We let 
\begin{align*}
x_{1}(t) & =x^{*}(t),\ y_{1}(t)=\int_{0}^{1}x^{*}(t-s)ds-1,\\
x_{2}(t) & =x^{*}\left(t-1\right),\ y_{2}(t)=\int_{1}^{2}x^{*}(t-s)ds-1.
\end{align*}
We are also interested in the positive periodic solution. The periodic
solution satisfies the following system of ordinary differential equations
\begin{subequations}\label{eq:fourdim}

\begin{align}
\frac{d}{dt}x_{1}(t) & =-rx_{1}(t)y_{1}(t),\label{eq:ode1}\\
\frac{d}{dt}y_{1}(t) & =x_{1}(t)-x_{2}(t),\label{eq:ode2}\\
\frac{d}{dt}x_{2}(t) & =-rx_{2}(t)y_{2}(t),\label{eq:ode3}\\
\frac{d}{dt}y_{2}(t) & =x_{2}(t)-x_{1}(t).\label{eq:ode4}
\end{align}
\end{subequations}The initial condition is \begin{subequations}\label{eq:IC_DDE}
\begin{align}
 & x_{1}(0)=a>0,\ x_{2}(0)=b>0,\label{eq:IC1}\\
 & y_{1}\left(0\right)=y_{2}\left(0\right)=0,\label{eq:IC2}
\end{align}
\end{subequations}where $a$ and $b$ will be determined later ($a=x^{*}(0)=x^{*}(2),\ b=x^{*}(-1)=x^{*}(1))$
in Section 4, so that $x_{1}(t)=x_{1}(t+2)$ holds.

From (\ref{eq:fourdim}) one sees that \begin{subequations}\label{eq:inv}
\begin{align}
y_{1}(t)+y_{2}(t) & =0,\label{eq:inv1}\\
x_{1}(t)x_{2}(t) & =ab\label{eq:inv2}
\end{align}
\end{subequations}hold for any $t\geq0$ . Thus one sees that the
periodic solution satisfies the following properties
\begin{equation}
\int_{0}^{2}x^{*}(t-s)ds=2,\ x^{*}(t)x^{*}(t-1)=\text{Const},\ t\in\mathbb{R}.\label{eq:perprop}
\end{equation}

\section{Integrable Ordinary differential equations}

We construct an initial function $\phi$ for the delay differential
equation (\ref{eq:dde1}) that generates a periodic solution of period
$2$. From (\ref{eq:fourdim}) and (\ref{eq:inv}), the system (\ref{eq:fourdim})
is reduced to the following system of ordinary differential equations\begin{subequations}\label{eq:ODE3}
\begin{align}
\frac{d}{dt}x(t) & =-rx(t)y(t),\label{eq:xeq_3}\\
\frac{d}{dt}y(t) & =x(t)-ab\frac{1}{x(t)},\label{eq:yeq_3}
\end{align}
\end{subequations}dropping the indices from $x_{1}$ and $y_{1}$
(cf. (\ref{eq:twodim})). From (\ref{eq:IC_DDE}) the initial condition
of (\ref{eq:ODE3}) is \begin{subequations}\label{eq:ODE3IC} 
\begin{align}
x(0) & =a,\label{eq:IC3}\\
y\left(0\right) & =0.\label{eq:IC4}
\end{align}
\end{subequations}We show that the system (\ref{eq:ODE3}) has a
conservative quantity.
\begin{prop}
\label{prop:Conservation}It holds that 
\begin{equation}
x(t)+ab\frac{1}{x(t)}+\frac{r}{2}y^{2}(t)=a+b,\ t\in\mathbb{R}\label{eq:cons}
\end{equation}
for the solution of the equation (\ref{eq:ODE3}) with the initial
condition (\ref{eq:ODE3IC}).
\end{prop}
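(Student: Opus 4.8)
The plan is to verify directly that the left-hand side of (\ref{eq:cons}) is a first integral of the planar system (\ref{eq:ODE3}). First I would record the domain issue: since $\dot x=-rxy$ is linear and homogeneous in $x$, the sign of $x$ is preserved, so $x(0)=a>0$ forces $x(t)>0$ on the maximal interval of existence and the term $ab/x(t)$ is well defined there.

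Next, define
\[
E(t):=x(t)+ab\,\frac{1}{x(t)}+\frac{r}{2}\,y(t)^{2}
\]
and differentiate along a solution of (\ref{eq:ODE3}):
\[
\frac{d}{dt}E(t)=\left(1-\frac{ab}{x(t)^{2}}\right)\frac{d}{dt}x(t)+r\,y(t)\,\frac{d}{dt}y(t).
\]
Substituting $\frac{d}{dt}x(t)=-rx(t)y(t)$ from (\ref{eq:xeq_3}) and $\frac{d}{dt}y(t)=x(t)-ab/x(t)$ from (\ref{eq:yeq_3}), the two contributions are
\[
-r y(t)\left(x(t)-\frac{ab}{x(t)}\right)+r y(t)\left(x(t)-\frac{ab}{x(t)}\right)=0,
\]
so $\frac{d}{dt}E(t)\equiv 0$; that is, $E$ is constant along the solution.

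Finally, evaluate the constant at $t=0$ using the initial condition (\ref{eq:ODE3IC}): $x(0)=a$ and $y(0)=0$ give $E(0)=a+ab/a+0=a+b$, which is exactly (\ref{eq:cons}). As a by-product, since $\xi\mapsto \xi+ab/\xi$ is proper on $(0,\infty)$, the level set $\{E=a+b\}$ is a bounded subset of $\{x>0\}$, so the solution cannot blow up or reach $x=0$ in finite time; it is therefore global, and the identity holds for all $t\in\mathbb{R}$ as stated.

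I do not expect a genuine obstacle here: the computation is a one-line cancellation. The only point requiring a little care is the justification that the solution exists for all real $t$ and stays in the region $x>0$ where the expression in (\ref{eq:cons}) is meaningful, and this follows a posteriori from the conservation law together with the invariance of $\{x>0\}$ under (\ref{eq:xeq_3}).
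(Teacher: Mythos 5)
Your proof is correct and follows essentially the same route as the paper: differentiate the candidate conserved quantity along solutions of (\ref{eq:ODE3}), observe the cancellation, and evaluate at $t=0$ using (\ref{eq:ODE3IC}). The additional remarks on positivity of $x$ and global existence are sound and slightly more careful than the paper, but they do not change the argument.
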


\begin{proof}
Differentiating the left hand side of (\ref{eq:cons}), we obtain
\begin{align*}
 & \frac{d}{dt}\left(x(t)+ab\frac{1}{x(t)}+\frac{r}{2}y^{2}(t)\right)\\
= & \left(1-ab\frac{1}{x^{2}(t)}\right)x'(t)+ry(t)y'(t)\\
= & \left(1-ab\frac{1}{x^{2}(t)}\right)\left(-rx(t)y(t)\right)+ry(t)\left(x(t)-ab\frac{1}{x(t)}\right)\\
= & 0.
\end{align*}
From (\ref{eq:ODE3IC}), it then follows that
\[
x(t)+ab\frac{1}{x(t)}+\frac{r}{2}y^{2}(t)=x(0)+ab\frac{1}{x(0)}+\frac{r}{2}y^{2}(0)=a+b
\]
for $t\in\mathbb{R}$.
\end{proof}
Differentiating the both sides of the equation (\ref{eq:yeq_3}),
we obtain 
\begin{align*}
\frac{d^{2}}{dt^{2}}y(t) & =-\left(1+ab\frac{1}{x^{2}(t)}\right)rx(t)y(t)\\
 & =-ry(t)\left(x(t)+ab\frac{1}{x(t)}\right).
\end{align*}
Using the identity (\ref{eq:cons}) in Proposition \ref{prop:Conservation},
we derive the Duffing equation:
\begin{equation}
\frac{d^{2}}{dt^{2}}y(t)=-ry(t)\left(a+b-\frac{r}{2}y^{2}(t)\right)\label{eq:DuffEq}
\end{equation}
with the following initial condition\begin{subequations}\label{eq:duffIC}
\begin{align}
y(0) & =0,\label{eq:duffIC1}\\
\frac{d}{dt}y(0) & =x(0)-ab\frac{1}{x(0)}=a-b.\label{eq:duffIC2}
\end{align}
\end{subequations}Denote by $\text{sn}$ the Jacobi elliptic sine
function \cite{Byrd:1954,Meyer:2001}. It is known that the solution
of the Duffing equation (\ref{eq:DuffEq}) is given by 
\begin{equation}
y(t)=\alpha\text{sn}(\beta t,k),\label{eq:alphasn}
\end{equation}
where $\alpha,\beta$ and $k$ are functions of $a$ and $b$ defined
by
\begin{align}
 & \alpha(a,b)=\sqrt{\frac{2}{r}}\left(\sqrt{a}-\sqrt{b}\right),\ \beta(a,b)=\sqrt{\frac{r}{2}}\left(\sqrt{a}+\sqrt{b}\right),\label{eq:def_abk}\\
 & k(a,b)=\frac{\sqrt{a}-\sqrt{b}}{\sqrt{a}+\sqrt{b}},\label{eq:def_abk2}
\end{align}
see e.g. Chapter 4 in \cite{Kovacic} and Chapter 2 in \cite{Rand:2012}.
To simplify the notation, we occasionally drop $(a,b)$ from $\alpha,\ \beta$
and $k$.

We then obtain the exact solution of the system (\ref{eq:ODE3}) with
the initial condition (\ref{eq:ODE3IC}).
\begin{prop}
\label{prop:ODE_sol}The solution of the equations (\ref{eq:ODE3})
with the initial condition (\ref{eq:ODE3IC}) is expressed as 
\begin{align}
x(t) & =a\left(\frac{1-k}{\text{dn}(\beta t,k)-k\text{cn}(\beta t,k)}\right)^{2}=a\left(\frac{\text{dn}(\beta t,k)+k\text{cn}(\beta t,k)}{1+k}\right)^{2},\label{eq:x_cl}\\
y(t) & =\alpha\text{sn}(\beta t,k),\label{eq:y_cl}
\end{align}
where $\alpha,\beta$ and $k$ are defined in (\ref{eq:def_abk})
and (\ref{eq:def_abk2}).
\end{prop}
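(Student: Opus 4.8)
The plan is to verify the asserted formula \eqref{eq:y_cl} for $y$ directly from the explicit solution \eqref{eq:alphasn} of the Duffing equation, then use the conservation law \eqref{eq:cons} to recover $x$, and finally reconcile the two displayed expressions for $x$ in \eqref{eq:x_cl} via a standard Jacobi elliptic identity.

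First I would record that, by the discussion preceding the proposition, $y(t)=\alpha\,\mathrm{sn}(\beta t,k)$ solves \eqref{eq:DuffEq} with the initial data \eqref{eq:duffIC}; one should at least check that $\frac{d}{dt}y(0)=\alpha\beta=a-b$ (which follows from \eqref{eq:def_abk} since $\alpha\beta=(\sqrt a-\sqrt b)(\sqrt a+\sqrt b)=a-b$, using $\mathrm{sn}'(0)=1$), and that the modulus and frequency in \eqref{eq:def_abk}--\eqref{eq:def_abk2} are exactly those producing amplitude $\alpha$ in the Duffing equation with the cubic coefficient $\tfrac r2$ and linear coefficient $r(a+b)$. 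That settles \eqref{eq:y_cl}.

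Next, from the conservation law \eqref{eq:cons} in Proposition \ref{prop:Conservation}, $x(t)$ satisfies $x + \frac{ab}{x} = a+b-\frac r2 y^2(t)$, i.e. $x^2 - \bigl(a+b-\frac r2\alpha^2\mathrm{sn}^2(\beta t,k)\bigr)x + ab = 0$. Using $\frac r2\alpha^2 = (\sqrt a-\sqrt b)^2$ one gets $a+b-\frac r2 y^2 = a+b-(\sqrt a-\sqrt b)^2\mathrm{sn}^2$, and solving the quadratic (taking the branch that gives $x(0)=a$, since $\mathrm{sn}(0)=0$ forces $x(0)\in\{a,b\}$) yields $x(t)$ as a rational expression in $\mathrm{sn}^2(\beta t,k)$. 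The remaining task is purely algebraic: rewrite that expression, using $\mathrm{cn}^2 = 1-\mathrm{sn}^2$ and $\mathrm{dn}^2 = 1-k^2\mathrm{sn}^2$ together with $k = \frac{\sqrt a-\sqrt b}{\sqrt a+\sqrt b}$, and verify it equals $a\bigl(\frac{1-k}{\mathrm{dn}-k\,\mathrm{cn}}\bigr)^2$. The equality of the two forms in \eqref{eq:x_cl} then amounts to $(\mathrm{dn}-k\,\mathrm{cn})(\mathrm{dn}+k\,\mathrm{cn}) = \mathrm{dn}^2-k^2\mathrm{cn}^2 = (1-k^2\mathrm{sn}^2) - k^2(1-\mathrm{sn}^2) = 1-k^2 = (1-k)(1+k)$, which is a one-line identity.

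The only mild obstacle is bookkeeping: one must pick the correct root of the quadratic so that $x(0)=a$ rather than $b$ (equivalently, check the sign of $\alpha$, which is positive when $a>b$ and negative when $a<b$, so that in both cases $\frac{d}{dt}y(0)=a-b$ is respected and $x$ decreases or increases appropriately near $t=0$), and one must confirm that the denominator $\mathrm{dn}(\beta t,k)-k\,\mathrm{cn}(\beta t,k)$ never vanishes so that \eqref{eq:x_cl} is well-defined for all $t\in\mathbb{R}$ — this follows since $\mathrm{dn}\ge\sqrt{1-k^2}>k\,\mathrm{cn}$ would need care, but in fact $\mathrm{dn}^2-k^2\mathrm{cn}^2=1-k^2>0$ already shows $\mathrm{dn}-k\,\mathrm{cn}$ has constant sign and is bounded away from $0$. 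With these points checked, substituting \eqref{eq:x_cl}--\eqref{eq:y_cl} back into \eqref{eq:xeq_3}--\eqref{eq:yeq_3} (or simply invoking uniqueness of solutions to the ODE with the given initial data) completes the proof.
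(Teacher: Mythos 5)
Your treatment of $y$ is fine, but the route you take to $x$ has a genuine gap. The paper does not use the conservation law at all for this step: it integrates \eqref{eq:xeq_3} directly, writing $x(t)=ae^{-r\int_0^t y(s)\,ds}$ and using the closed-form antiderivative $\int \text{sn}(u,k)\,du=\tfrac1k\ln\left(\text{dn}(u,k)-k\,\text{cn}(u,k)\right)$ together with $\tfrac{r\alpha}{\beta k}=2$; since $\text{dn}-k\,\text{cn}>0$ everywhere, this produces \eqref{eq:x_cl} globally with no case analysis. Your alternative — solving the quadratic $x^2-\left(a+b-\tfrac r2 y^2\right)x+ab=0$ coming from \eqref{eq:cons} — does not work as you describe it. The discriminant factors as
\begin{equation*}
\left(a+b-\tfrac r2 y^2\right)^2-4ab=(a-b)^2\,\text{cn}^2(\beta t,k)\,\text{dn}^2(\beta t,k),
\end{equation*}
so the two roots are $\tfrac12\left(S\pm|a-b|\,\text{dn}\,|\text{cn}|\right)$ with $S=a+b-\tfrac r2y^2$, while the actual solution is $\tfrac12\left(S+(a-b)\,\text{cn}\,\text{dn}\right)=a\left(\frac{\text{dn}+k\,\text{cn}}{1+k}\right)^2$. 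This is \emph{not} a single branch of the quadratic and is \emph{not} a rational expression in $\text{sn}^2$: because $\text{cn}(\beta t,k)$ changes sign at $\beta t=(2n+1)K(k)$, the solution crosses $x=\sqrt{ab}$ there (where the two roots collide) and switches from the larger root to the smaller one. Staying on ``the branch with $x(0)=a$'' would instead give a function with corners at those times, and the ``purely algebraic'' reconciliation with $a\left(\frac{1-k}{\text{dn}-k\,\text{cn}}\right)^2$ cannot be carried out using only $\text{cn}^2=1-\text{sn}^2$ and $\text{dn}^2=1-k^2\text{sn}^2$, since the cross term $\text{dn}\,\text{cn}$ carries a sign these identities do not determine.

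The gap is repairable: either track the sign of $\text{cn}$ across each quarter period when extracting the square root of the discriminant, or abandon the derivation and simply verify by direct substitution (using $\text{cn}'=-\text{sn}\,\text{dn}$, $\text{dn}'=-k^2\text{sn}\,\text{cn}$) that \eqref{eq:x_cl}--\eqref{eq:y_cl} satisfy \eqref{eq:ODE3} and \eqref{eq:ODE3IC}, then invoke uniqueness — the fallback you mention in your last sentence but do not execute. As written, though, the central mechanism of your argument produces the wrong function, whereas the paper's direct integration avoids the branch issue entirely. Your closing identity $(\text{dn}-k\,\text{cn})(\text{dn}+k\,\text{cn})=1-k^2$ and the observation that the denominator is bounded away from zero coincide with the paper and are correct.
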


\begin{proof}
Since (\ref{eq:y_cl}) is given in (\ref{eq:alphasn}), we show the
equality in (\ref{eq:x_cl}), integrating the equation (\ref{eq:xeq_3}).
We get 
\[
x(t)=ae^{-r\int_{0}^{t}y(s)ds}.
\]
Using (\ref{eq:y_cl}) we compute 
\begin{align*}
r\int_{0}^{t}y(s)ds & =r\alpha\int_{0}^{t}\text{sn}(\beta u,k)du\\
 & =\frac{r\alpha}{\beta k}\left[\ln\left(\text{dn}(s,k)-k\text{cn}(s,k)\right)\right]_{0}^{\beta t}\\
 & =\frac{r\alpha}{\beta k}\ln\left(\frac{\text{dn}(\beta t,k)-k\text{cn}(\beta t,k)}{1-k}\right).
\end{align*}
Note that $\frac{r\alpha}{\beta k}=2$ holds from the definitions
in (\ref{eq:def_abk}) and (\ref{eq:def_abk2}). We then get 
\[
r\int_{0}^{t}y(s)ds=2\ln\left(\frac{\text{dn}(\beta t,k)-k\text{cn}(\beta t,k)}{1-k}\right),
\]
from which the first equality in (\ref{eq:x_cl}) follows.

Using the properties of the elliptic functions, it holds that 
\[
\left(\text{dn}-k\text{cn}\right)\left(\text{dn}+k\text{cn}\right)=\text{dn}^{2}-k^{2}\text{cn}^{2}=1-k^{2}.
\]
Therefore, we obtain the following equality
\[
\left(\frac{1-k}{\text{dn}(\beta t,k)-k\text{cn}(\beta t,k)}\right)^{2}=\left(\frac{\text{dn}(\beta t,k)+k\text{cn}(\beta t,k)}{1+k}\right)^{2}.
\]
\end{proof}

\section{Periodic solution of period $2$}

In this section we will determine $a$, the initial value for the
$x$ component of the system (\ref{eq:ODE3}), so that, for the solution
given in Proposition \ref{prop:ODE_sol}, the period is $2$ and the
integral constant becomes $-1$. The periodic solution finally solves
the delay differential equation (\ref{eq:dde1}).

Let us introduce the complete elliptic integrals of the first kind
and of the second kind \cite{Byrd:1954,Meyer:2001}. Those are respectively
given as 
\begin{align*}
K(k) & =\int_{0}^{\frac{\pi}{2}}\frac{1}{\sqrt{1-k^{2}\sin^{2}\theta}}d\theta,\\
E(k) & =\int_{0}^{\frac{\pi}{2}}\sqrt{1-k^{2}\sin^{2}\theta}d\theta
\end{align*}
for $0\leq k<1$. In the following theorem we derive the two conditions
for that the period of the solution given in Proposition \ref{prop:ODE_sol}
is $2$. 
\begin{thm}
\label{thm:twocond}Assume that the following two conditions hold
\begin{align}
 & \sqrt{\frac{r}{2}}\left(\sqrt{a}+\sqrt{b}\right)=2K(k),\label{eq:1stCondition}\\
 & \left(\sqrt{a}+\sqrt{b}\right)\sqrt{\frac{2}{r}}E(k)-\sqrt{ab}=1.\label{eq:2ndCondition}
\end{align}
Then, for the solution of the equation (\ref{eq:ODE3}) with the initial
condition (\ref{eq:ODE3IC}), it holds that
\begin{equation}
\left(x(t),y\left(t\right)\right)=\left(x(t+2),y(t+2)\right)\label{eq:periodtwo}
\end{equation}
and that
\begin{equation}
y(t)=\int_{t-1}^{t}x(s)ds-1\label{eq:integral_constant}
\end{equation}
for any $t\in\mathbb{R}$. 
\end{thm}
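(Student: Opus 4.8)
The plan is to read everything off from the explicit solution in Proposition~\ref{prop:ODE_sol}, combined with the standard period and half-period identities for the Jacobi elliptic functions. First I would note that condition~(\ref{eq:1stCondition}) is precisely the statement $\beta=2K(k)$, so that under $t\mapsto t+2$ the argument $\beta t$ advances by $4K(k)$. Since $\text{sn}$ and $\text{cn}$ have period $4K(k)$ and $\text{dn}$ has period $2K(k)$, substituting into (\ref{eq:x_cl})--(\ref{eq:y_cl}) gives $(x(t+2),y(t+2))=(x(t),y(t))$ immediately, which is (\ref{eq:periodtwo}); this holds for every $t\in\mathbb{R}$ because $x$ and $y$ are defined by explicit formulas on the whole line.

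The crucial intermediate step is the pointwise identity $x(t)x(t-1)=ab$. Using $\beta=2K(k)$ together with the half-period relations $\text{dn}(u-2K,k)=\text{dn}(u,k)$ and $\text{cn}(u-2K,k)=-\text{cn}(u,k)$ in the first expression of (\ref{eq:x_cl}), one obtains $x(t-1)=a\big((1-k)/(\text{dn}(\beta t,k)+k\,\text{cn}(\beta t,k))\big)^{2}$. Multiplying this by the first form of $x(t)$ in (\ref{eq:x_cl}) and using $(\text{dn}-k\,\text{cn})(\text{dn}+k\,\text{cn})=\text{dn}^{2}-k^{2}\text{cn}^{2}=1-k^{2}$ gives $x(t)x(t-1)=a^{2}(1-k)^{2}/(1+k)^{2}$, and since $(1-k)/(1+k)=\sqrt{b}/\sqrt{a}$ by (\ref{eq:def_abk2}) this equals $ab$. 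Consequently equation~(\ref{eq:yeq_3}) reads $y'(t)=x(t)-x(t-1)$ for all $t$.

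It then remains to prove the normalization $\int_{-1}^{0}x(s)\,ds=1$, after which (\ref{eq:integral_constant}) follows by integrating $y'(s)=x(s)-x(s-1)$ from $0$ to $t$: using $y(0)=0$ one gets $y(t)=\int_{0}^{t}x(s)\,ds-\int_{-1}^{t-1}x(s)\,ds=\int_{t-1}^{t}x(s)\,ds-1$. To obtain the normalization I would substitute $x(t-1)=ab/x(t)$ into the conservation law~(\ref{eq:cons}) to get $x(t)+x(t-1)=a+b-\frac{r}{2}y(t)^{2}$; integrating over $[0,1]$ and noting that $y(1)=\alpha\,\text{sn}(2K,k)=0=y(0)$, so that $\int_{-1}^{0}x=\int_{0}^{1}x$, yields $2\int_{0}^{1}x(s)\,ds=a+b-\frac{r}{2}\int_{0}^{1}y(s)^{2}\,ds$. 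The remaining integral I would evaluate from $\text{sn}^{2}=(1-\text{dn}^{2})/k^{2}$ and $\int_{0}^{2K}\text{dn}^{2}(u,k)\,du=2E(k)$, which gives $\int_{0}^{1}y(s)^{2}\,ds=(\alpha^{2}/\beta)\cdot 2(K-E)/k^{2}$; inserting the definitions (\ref{eq:def_abk})--(\ref{eq:def_abk2}) and then using both hypotheses (\ref{eq:1stCondition}) and (\ref{eq:2ndCondition}) should collapse the right-hand side exactly to $2$, hence $\int_{0}^{1}x(s)\,ds=1=\int_{-1}^{0}x(s)\,ds$.

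I expect the main obstacle to be exactly this last computation: keeping the $\alpha,\beta,k$ bookkeeping straight and verifying that (\ref{eq:1stCondition}) makes the $K$-term combine with $a+b$ into $(\sqrt{a}+\sqrt{b})^{2}$ while (\ref{eq:2ndCondition}) is precisely what turns the leftover $\sqrt{2/r}(\sqrt{a}+\sqrt{b})E-\sqrt{ab}$ into $1$. Everything else --- the periodicity, the identity $x(t)x(t-1)=ab$, and the final integration --- is routine once the elliptic function identities are in hand, and all of it is valid for every $t\in\mathbb{R}$.
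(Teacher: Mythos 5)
Your argument is correct, and its skeleton coincides with the paper's: condition (\ref{eq:1stCondition}) gives $\beta=2K(k)$, hence period $2$ from the $4K$-periodicity of the elliptic functions; the half-period identities $\text{cn}(u+2K,k)=-\text{cn}(u,k)$, $\text{dn}(u+2K,k)=\text{dn}(u,k)$ give $x(t)x(t-1)=ab$, so that (\ref{eq:yeq_3}) becomes $y'(t)=x(t)-x(t-1)$; and the whole matter reduces to the normalization $\int_0^1 x(s)\,ds=1$. Where you genuinely diverge is in establishing that normalization. The paper computes $\int_0^1 x(t)\,dt$ directly from the explicit formula (\ref{eq:x_cl}), expanding $(\text{dn}+k\,\text{cn})^2=k^2-1+2\,\text{dn}^2+2k\,\text{dn}\,\text{cn}$ and using $\int_0^{2K}\text{dn}^2=2E$ together with $\int_0^{2K}\text{dn}\,\text{cn}=0$. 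You instead integrate the conservation law of Proposition \ref{prop:Conservation} in the form $x(t)+x(t-1)=a+b-\tfrac{r}{2}y(t)^2$ over $[0,1]$, use $y(0)=y(1)=0$ to get $\int_{-1}^{0}x=\int_{0}^{1}x$, and evaluate $\int_0^1 y^2$ via $\int_0^{2K}\text{sn}^2=2(K-E)/k^2$; the bookkeeping does collapse as you predict, since $\tfrac{r}{2}\alpha^2/k^2=(\sqrt{a}+\sqrt{b})^2$ and $\beta=2K$ turn the right-hand side into $2\bigl[(\sqrt{a}+\sqrt{b})\sqrt{2/r}\,E(k)-\sqrt{ab}\bigr]=2$. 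Your route buys a slightly cleaner computation (no cross term $\text{dn}\,\text{cn}$ and no need to re-express $a/(1+k)^2$), at the cost of invoking the energy integral; the paper's route is self-contained in the explicit solution. Both are valid, and your intermediate steps (the product identity from the first form of (\ref{eq:x_cl}), the justification of $\int_{-1}^0 x=\int_0^1 x$ from $y(1)-y(0)=0$) are all sound.
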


\begin{proof}
From (\ref{eq:1stCondition}), we have $2\beta=4K(k)$. Since the
Jacobi elliptic functions, $\text{sn},\ \text{cn}$ and $\text{dn}$,
have a period $4K(k)$, one has 
\begin{align*}
\text{sn}\left(\beta t,k\right) & =\text{sn}(\beta\left(t+2\right),k),\\
\text{cn}\left(\beta t,k\right) & =\text{cn}(\beta\left(t+2\right),k),\\
\text{dn}\left(\beta t,k\right) & =\text{dn}(\beta\left(t+2\right),k).
\end{align*}
Then it is easy to see that (\ref{eq:periodtwo}) follows from (\ref{eq:x_cl})
and (\ref{eq:y_cl}). Next we show that (\ref{eq:integral_constant})
holds. From the symmetry of the Jacobi elliptic functions, we have
\begin{align*}
\text{cn}\left(\beta\left(t-1\right),k\right) & =-\text{cn}(\beta t,k),\\
\text{dn}\left(\beta\left(t-1\right),k\right) & =\text{dn}(\beta t,k).
\end{align*}
Thus from (\ref{eq:x_cl}) we obtain
\[
x(t-1)=a\left(\frac{\text{dn}(\beta\left(t-1\right),k)+k\text{cn}(\beta\left(t-1\right),k)}{1+k}\right)^{2}=a\left(\frac{\text{dn}\left(\beta t,k\right)-k\text{cn}\left(\beta t,k\right)}{1+k}\right)^{2}
\]
and $x(t)x(t-1)=a^{2}\left(\frac{1-k}{1+k}\right)^{2}=ab$ follows.
Then from (\ref{eq:yeq_3}), for the solution of the equation (\ref{eq:ODE3}),
we have the following equality
\[
\frac{d}{dt}y(t)=x(t)-x(t-1),
\]
implying that 
\begin{equation}
y(t)=\int_{0}^{1}x(t-s)ds+\text{const}.\label{eq:intconst}
\end{equation}
From (\ref{eq:1stCondition}) (i.e., $2\beta=4K(k)$) and (\ref{eq:y_cl})
we have 
\[
y(0)=y(1)=0.
\]
Now we show that 
\begin{equation}
\int_{0}^{1}x(t)dt=\left(\sqrt{a}+\sqrt{b}\right)\sqrt{\frac{r}{2}}E(k)-\sqrt{ab}.\label{eq:integral_xt}
\end{equation}
Using the properties of the Jacobi elliptic functions \cite{Byrd:1954},
we compute
\begin{align*}
\left(\text{dn}+k\text{cn}\right)^{2} & =\text{dn}^{2}+2k\text{dn}\text{cn}+k^{2}\text{cn}^{2}=k^{2}-1+2\text{dn}^{2}+2k\text{dn}\text{cn}.
\end{align*}
From the following computations
\begin{align*}
\int_{0}^{1}\text{dn}^{2}(\beta t,k)dt & =\frac{1}{\beta}\int_{0}^{2K(k)}\text{dn}^{2}(u,k)dt=\frac{2}{\beta}E(k),\\
\int_{0}^{1}\text{dn}(\beta t,k)\text{cn}(\beta t,k)dt & =0,
\end{align*}
one sees that 
\[
\int_{0}^{1}x(t)dt=a\frac{k^{2}-1+\frac{4}{\beta}E(k)}{\left(1+k\right)^{2}}.
\]
by (\ref{eq:x_cl}). Then we obtain (\ref{eq:integral_xt}) from (\ref{eq:def_abk})
and (\ref{eq:def_abk2}). from the condition (\ref{eq:2ndCondition})
the integral constant in (\ref{eq:intconst}) becomes $-1$, for the
solution of the equation (\ref{eq:ODE3}). 
\end{proof}
The conditions (\ref{eq:1stCondition}) and (\ref{eq:2ndCondition})
ensure the existence of a periodic solution of period 2 for the system
of ordinary differential equations (\ref{eq:ODE3}), satisfying (\ref{eq:integral_constant}).
The periodic solution obtained in Theorem \ref{thm:twocond} is also
a periodic solution of the delay differential equation (\ref{eq:dde1}).
Our remaining task is to interpret the conditions (\ref{eq:1stCondition})
and (\ref{eq:2ndCondition}) in terms of the parameter $r$ in the
equation (\ref{eq:dde1}).

Eliminating $a$ and $b$ from the conditions (\ref{eq:1stCondition})
and (\ref{eq:2ndCondition}), we obtain the following equality
\begin{equation}
r=L(k),\ 0\leq k<1,\label{eq:Leq}
\end{equation}
where 
\[
L(k):=2K(k)\left(2E(k)-K(k)\left(1-k^{2}\right)\right).
\]
For the derivation of (\ref{eq:Leq}), see the proof of Proposition
\ref{prop:ab} below. Now we show that the equation (\ref{eq:Leq})
has a unique root.
\begin{lem}
\label{lem:L}The function $L$ is a strictly increasing function
with 
\[
L(0)=\frac{\pi^{2}}{2}<\lim_{k\to1-0}L(k)=\infty.
\]
\end{lem}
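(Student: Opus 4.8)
The plan is to confirm the two limiting values by substituting the standard special values and asymptotics of the complete elliptic integrals, and then to establish strict monotonicity by computing $L'$ in closed form and reading off its sign. At the left endpoint, $K(0)=E(0)=\frac{\pi}{2}$ gives $L(0)=2\cdot\frac{\pi}{2}\bigl(2\cdot\frac{\pi}{2}-\frac{\pi}{2}\bigr)=\frac{\pi^{2}}{2}$ immediately. For $k\to1-0$, I would use the classical facts that $E(k)\to1$, that $K(k)\to\infty$, and that $K$ blows up only logarithmically, so that $(1-k^{2})K(k)\to0$; consequently $2E(k)-(1-k^{2})K(k)\to2$, and since the remaining factor $2K(k)\to\infty$ we get $L(k)\to\infty$. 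These properties of $K$ and $E$ can be found in, e.g., \cite{Byrd:1954}.

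The core step is to show $L'(k)>0$ for $k\in(0,1)$. I would write $L(k)=4K(k)E(k)-2(1-k^{2})K(k)^{2}$ and differentiate using the well-known formulas $\frac{dK}{dk}=\frac{E-(1-k^{2})K}{k(1-k^{2})}$ and $\frac{dE}{dk}=\frac{E-K}{k}$. Substituting these and collecting terms, the contributions involving $K^{2}$ cancel and one is left with
\[
L'(k)=\frac{4E(k)\bigl(E(k)-(1-k^{2})K(k)\bigr)}{k(1-k^{2})}.
\]
Since $E(k)>0$ and $k(1-k^{2})>0$ on $(0,1)$, positivity of $L'$ reduces to the inequality $E(k)>(1-k^{2})K(k)$ there; but this is precisely the statement that $\frac{dK}{dk}>0$, i.e., the (standard) strict monotonicity of $K$. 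Hence $L$ is strictly increasing on $(0,1)$, and by continuity at $k=0$ it is strictly increasing on $[0,1)$.

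The only genuine obstacle is bookkeeping: one must carry out the differentiation of $L$ with enough care that the $K^{2}$-terms visibly cancel, since otherwise the sign of $L'$ is not apparent. The one place where a quoted property, rather than an elementary manipulation, is needed is the limit $(1-k^{2})K(k)\to0$ as $k\to1-0$, which rests on the logarithmic growth rate of $K$ near $k=1$; everything else is direct computation with the two derivative formulas above.
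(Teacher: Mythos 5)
Your proposal is correct and follows essentially the same route as the paper: both arguments rest on the derivative formulas for $K$ and $E$, reduce positivity of the derivative to the standard inequality $E(k)>(1-k^{2})K(k)$ (i.e.\ $K'>0$), and obtain the limit at $k\to1-0$ from $(1-k^{2})K(k)\to0$. The only difference is organizational: you differentiate the full product to get the closed form $L'(k)=4E(k)K'(k)$, whereas the paper differentiates only the factor $2E(k)-(1-k^{2})K(k)$ and uses that $L$ is a product of positive increasing functions; your computation checks out.
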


\begin{proof}
From the definition of $L$, it is easy to see $L(0)=\frac{\pi^{2}}{2}.$
By the straightforward calculation, we obtain
\begin{align*}
 & \frac{d}{dk}\left(2E(k)-K(k)\left(1-k^{2}\right)\right)\\
= & \frac{2}{k}\left(E\left(k\right)-K\left(k\right)\right)-\frac{1}{k}\left(E\left(k\right)-\left(1-k^{2}\right)K\left(k\right)\right)+2kK\left(k\right)\\
= & \frac{1}{k}\left(E\left(k\right)-\left(1-k^{2}\right)K\left(k\right)\right)\\
= & \left(1-k^{2}\right)K^{\prime}\left(k\right)\\
> & 0,
\end{align*}
noting that 
\begin{align*}
K^{\prime}(k) & =\frac{1}{k}\left(\frac{E(k)}{1-k^{2}}-K(k)\right).\\
E^{\prime}(k) & =\frac{1}{k}\left(E(k)-K(k)\right),
\end{align*}
see e.g. P. 282 of \cite{Byrd:1954}. Since it can be shown that 
\[
\lim_{k\to1-0}K(k)(1-k^{2})=\lim_{k\to1-0}\int_{0}^{\frac{\pi}{2}}\frac{1-k^{2}}{\sqrt{1-k^{2}\sin^{2}\theta}}d\theta=0,
\]
$L$ is a strictly increasing function with $\lim_{k\to1-0}L(k)=\infty$. 
\end{proof}
Then, $a$ and $b$ are determined by the following Proposition.
\begin{prop}
\label{prop:ab}There exist $a>0$ and $b>0$ such that the two conditions
(\ref{eq:1stCondition}) and (\ref{eq:2ndCondition}) in Theorem \ref{thm:twocond}
hold if and only if $r>\frac{\pi^{2}}{2}$. In particular, $a$ and
$b$ are given as
\begin{equation}
\left[\begin{array}{c}
a\\
b
\end{array}\right]=\frac{K(k)}{2E(k)-K(k)\left(1-k^{2}\right)}\left[\begin{array}{c}
\left(1+k\right)^{2}\\
\left(1-k\right)^{2}
\end{array}\right],\label{eq:ab}
\end{equation}
where $k=L^{-1}(r),\ r>\frac{\pi^{2}}{2}$.
\end{prop}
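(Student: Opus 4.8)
The plan is to eliminate $a$ and $b$ between the two conditions (\ref{eq:1stCondition}) and (\ref{eq:2ndCondition}), using also the defining relation (\ref{eq:def_abk2}) for the modulus $k$, and to reduce everything to the single scalar equation $r=L(k)$ of (\ref{eq:Leq}); the equivalence with $r>\frac{\pi^{2}}{2}$ is then immediate from Lemma \ref{lem:L}, and the closed form (\ref{eq:ab}) comes out by back-substitution. Concretely, I would set $p:=\sqrt{a}$ and $q:=\sqrt{b}$ and first record the algebraic consequences of (\ref{eq:def_abk2}): since $k=\frac{p-q}{p+q}$, one has $p-q=k(p+q)$, hence $p=\frac{1+k}{2}(p+q)$, $q=\frac{1-k}{2}(p+q)$, and $\sqrt{ab}=pq=\frac{1-k^{2}}{4}(p+q)^{2}$. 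Thus $(a,b)$ is completely determined by the pair $(k,p+q)$, and $a>0$, $b>0$ is equivalent to $p+q>0$ together with $k\in(-1,1)$. Moreover both conditions are invariant under $a\leftrightarrow b$, since they involve only $\sqrt{a}+\sqrt{b}$, $\sqrt{ab}$, and the functions $K,E$, which depend on $k$ only through $k^{2}$; so I may assume $a\ge b$, i.e. $k\in[0,1)$, matching the convention in the claimed formula (\ref{eq:ab}).

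Next I would carry out the elimination. Condition (\ref{eq:1stCondition}) reads exactly $p+q=2K(k)\sqrt{2/r}$, which fixes $p+q$ once $k$ and $r$ are given. Substituting this into (\ref{eq:2ndCondition}): the first term becomes $(p+q)\sqrt{2/r}\,E(k)=2K(k)\cdot\frac{2}{r}\,E(k)=\frac{4}{r}K(k)E(k)$, and $\sqrt{ab}=\frac{1-k^{2}}{4}(p+q)^{2}=\frac{2}{r}(1-k^{2})K(k)^{2}$; hence (\ref{eq:2ndCondition}) is equivalent to $\frac{4}{r}K(k)E(k)-\frac{2}{r}(1-k^{2})K(k)^{2}=1$, i.e. to $r=2K(k)\bigl(2E(k)-(1-k^{2})K(k)\bigr)=L(k)$. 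This is precisely (\ref{eq:Leq}). So a pair $(a,b)$ with $a\ge b>0$ satisfies (\ref{eq:1stCondition})--(\ref{eq:2ndCondition}) if and only if its modulus $k\in[0,1)$ solves $r=L(k)$ and $p+q=2K(k)\sqrt{2/r}$.

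Finally, by Lemma \ref{lem:L}, $L$ is a strictly increasing bijection of $[0,1)$ onto $[\frac{\pi^{2}}{2},\infty)$, so $r=L(k)$ has a solution $k\in(0,1)$ precisely when $r>\frac{\pi^{2}}{2}$, and then $k=L^{-1}(r)$ is unique (the boundary case $k=0$ corresponds to $r=\frac{\pi^{2}}{2}$ and forces $a=b=1$, for which $\frac{d}{dt}y(0)=a-b=0$, so $y\equiv0$ and the solution of (\ref{eq:ODE3}) degenerates to the equilibrium rather than a genuine period-$2$ orbit; this accounts for the strict inequality, consistent with the hypothesis $r>\frac{\pi^{2}}{2}$ of Theorem \ref{thm:main}). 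For $r>\frac{\pi^{2}}{2}$ and $k=L^{-1}(r)$, substituting $r=L(k)$ into $(p+q)^{2}=\frac{8}{r}K(k)^{2}$ gives $(p+q)^{2}=\frac{4K(k)}{2E(k)-(1-k^{2})K(k)}$, and then $a=p^{2}=\frac{(1+k)^{2}}{4}(p+q)^{2}$, $b=q^{2}=\frac{(1-k)^{2}}{4}(p+q)^{2}$ yield exactly (\ref{eq:ab}). The computations are light; the one step that needs a moment's care is checking that the denominator $2E(k)-(1-k^{2})K(k)$ is strictly positive on $[0,1)$, so that (\ref{eq:ab}) really produces $a,b>0$ — this follows from the identity $\frac{d}{dk}\bigl(2E(k)-(1-k^{2})K(k)\bigr)=(1-k^{2})K'(k)>0$ established in the proof of Lemma \ref{lem:L} together with the value $\frac{\pi}{2}$ at $k=0$. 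That, plus handling the $a\leftrightarrow b$ symmetry cleanly, are really the only obstacles.
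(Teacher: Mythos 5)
Your proof is correct and follows essentially the same route as the paper: use the definition of $k$ to reduce both conditions to relations in $k$ and $\sqrt{a}+\sqrt{b}$, eliminate to obtain $r=L(k)$, invoke Lemma \ref{lem:L}, and back-substitute to get (\ref{eq:ab}). Your added checks (positivity of the denominator $2E(k)-(1-k^{2})K(k)$ and the degenerate case $k=0$) are correct refinements the paper leaves implicit.
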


\begin{proof}
Consider $a>0$ and $b>0$ for the two equations (\ref{eq:1stCondition})
and (\ref{eq:2ndCondition}). From the definition of $k$ in (\ref{eq:def_abk2})
we have 
\begin{equation}
\sqrt{b}=\frac{1-k}{1+k}\sqrt{a}.\label{eq:a1}
\end{equation}
thus the two conditions (\ref{eq:1stCondition}) and (\ref{eq:2ndCondition})
are expressed in terms of $a$ and $k$, namely 
\begin{equation}
\sqrt{a}=K(k)\left(1+k\right)\sqrt{\frac{2}{r}}.\label{eq:a2}
\end{equation}
Substituting (\ref{eq:a2}) to (\ref{eq:2ndCondition}), we arrive
at the following equation 
\[
L(k)=r,\ 0\leq k<1.
\]
From Lemma \ref{lem:L}, for $r>\frac{\pi^{2}}{2}$, we can find $k=L^{-1}\left(r\right)>0$.
From (\ref{eq:a1}) and (\ref{eq:a2}), $a$ and $b$ can be computed
as in (\ref{eq:ab}).
\end{proof}
Finally we obtain the following theorem
\begin{thm}
\label{thm:maintheo}Let $r>\frac{\pi^{2}}{2}$. Then the delay differential
equation (\ref{eq:dde1}) has a periodic solution of period $2$.
The periodic solution is expressed as in (\ref{eq:x_cl}), where $a$
and $b$ are determined in Proposition \ref{prop:ab}.
\end{thm}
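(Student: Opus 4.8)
The plan is simply to splice together Proposition~\ref{prop:ab}, Proposition~\ref{prop:ODE_sol} and Theorem~\ref{thm:twocond}. First I would fix $r>\frac{\pi^{2}}{2}$ and invoke Proposition~\ref{prop:ab} to produce numbers $a>0$ and $b>0$ satisfying the two conditions (\ref{eq:1stCondition}) and (\ref{eq:2ndCondition}); these are given in closed form by (\ref{eq:ab}) with $k=L^{-1}(r)$, which exists and is positive by Lemma~\ref{lem:L}. With these values of $a$ and $b$, let $(x,y)$ be the solution of the ordinary differential equation (\ref{eq:ODE3}) with the initial condition (\ref{eq:ODE3IC}); it is global and is given explicitly by (\ref{eq:x_cl})--(\ref{eq:y_cl}) by Proposition~\ref{prop:ODE_sol}, because the Jacobi elliptic functions are defined on all of $\mathbb{R}$.

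Next I would apply Theorem~\ref{thm:twocond}. Since (\ref{eq:1stCondition}) and (\ref{eq:2ndCondition}) hold, that theorem gives $\left(x(t),y(t)\right)=\left(x(t+2),y(t+2)\right)$ for all $t$, and in particular $x(t)=x(t-2)$, so $x$ has period $2$. It also gives the crucial integral identity (\ref{eq:integral_constant}), which after a change of variables in the integral reads
\[
y(t)=\int_{t-1}^{t}x(s)\,ds-1=\int_{0}^{1}x(t-s)\,ds-1,\qquad t\in\mathbb{R}.
\]
Substituting this expression for $y(t)$ into the first equation (\ref{eq:xeq_3}) of the system yields
\[
\frac{d}{dt}x(t)=-rx(t)y(t)=rx(t)\left(1-\int_{0}^{1}x(t-s)\,ds\right),
\]
so $x$ solves the delay differential equation (\ref{eq:dde1}). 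Thus $x$ is a period-$2$ solution of (\ref{eq:dde1}) expressed as in (\ref{eq:x_cl}), as claimed; one may take the associated initial function to be $\phi=x|_{[-1,0]}\in C$.

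To finish I would check positivity, so that $x$ belongs to the admissible class $\phi(0)>0$ fixed in Section~2. From the identity $(\text{dn}-k\text{cn})(\text{dn}+k\text{cn})=1-k^{2}>0$ established in the proof of Proposition~\ref{prop:ODE_sol}, neither factor appearing in (\ref{eq:x_cl}) can vanish, and since $a>0$ this forces $x(t)>0$ for all $t\in\mathbb{R}$.

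Since every component is already proved, there is no real obstacle remaining; the one point that needs care is logical direction. The system (\ref{eq:ODE3}) was derived in Section~2 under the hypothesis that a period-$2$ solution of (\ref{eq:dde1}) exists, so the argument here must run the other way: one starts from the explicitly constructed pair $(x,y)$ and verifies directly that $x$ satisfies (\ref{eq:dde1}). The identity (\ref{eq:integral_constant}) supplied by Theorem~\ref{thm:twocond} is exactly what closes this loop, turning the ordinary differential equation (\ref{eq:xeq_3}) back into the delay equation.
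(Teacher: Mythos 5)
Your proposal is correct and follows essentially the same route as the paper, which leaves the proof of this theorem implicit as the combination of Theorem~\ref{thm:twocond} and Proposition~\ref{prop:ab} (with the remark that the periodic solution of the ODE system satisfying (\ref{eq:integral_constant}) is also a solution of (\ref{eq:dde1})). Your explicit verification that substituting (\ref{eq:integral_constant}) into (\ref{eq:xeq_3}) recovers the delay equation, and your positivity check via $(\mathrm{dn}-k\,\mathrm{cn})(\mathrm{dn}+k\,\mathrm{cn})=1-k^{2}>0$, merely make precise steps the paper takes for granted.
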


Denote by $x^{*}(t)$ the periodic solution of (\ref{eq:dde1}) with
$x^{*}(0)=a$, which satisfies (\ref{eq:perprop}). It is easy to
see that 
\[
\max_{t\in\left[-1,1\right]}x^{*}(t)=x^{*}(0)=a,\ \min_{t\in\left[-1,1\right]}x^{*}(t)=x^{*}(-1)=x^{*}(1)=b.
\]
Thus one sees that $x^{*}(t)x^{*}(t-1)=ab$ for $t\in\mathbb{R}$.
From (\ref{eq:ab}) it can be shown that $\lim_{r\to\infty}\left(a,b\right)=\left(\infty,0\right)$,
thus the amplitude of the periodic solution tends to $\infty$ as
$r\to\infty$. We also note that 
\[
\lim_{r\to\infty}ab=0.
\]
Finally, from the symmetry of the Jacobi elliptic functions, it follows
that 
\[
x^{*}(2n+s)=x^{*}(2n-s),\ n\in\mathbb{Z},\ s\in\mathbb{R}.
\]

\begin{figure}
\begin{centering}
\includegraphics[scale=0.8]{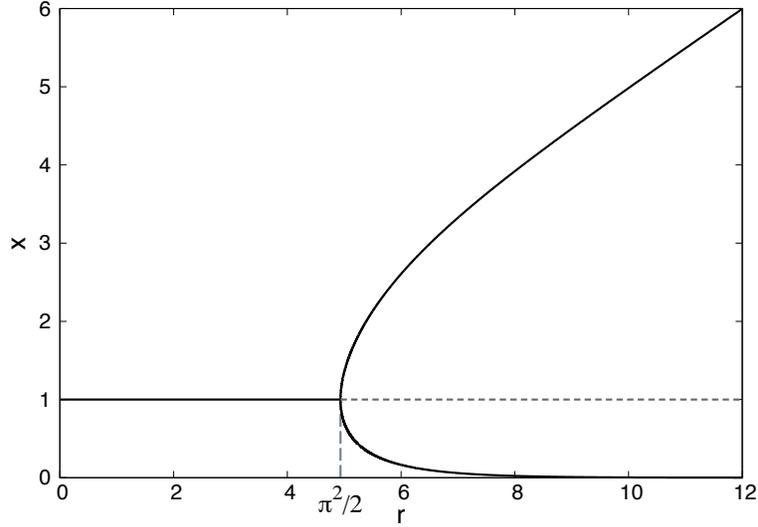}
\par\end{centering}
\caption{Bifurcation of the equilibrium. The equilibrium $x=1$ is asymptotically
stable for $r<\frac{\pi^{2}}{2}$ and is unstable for $r>\frac{\pi^{2}}{2}$.
At $r=\frac{\pi^{2}}{2}$ a Hopf bifurcation occurs and the periodic
solution appears.}
\end{figure}

\begin{figure}
\includegraphics[scale=0.8]{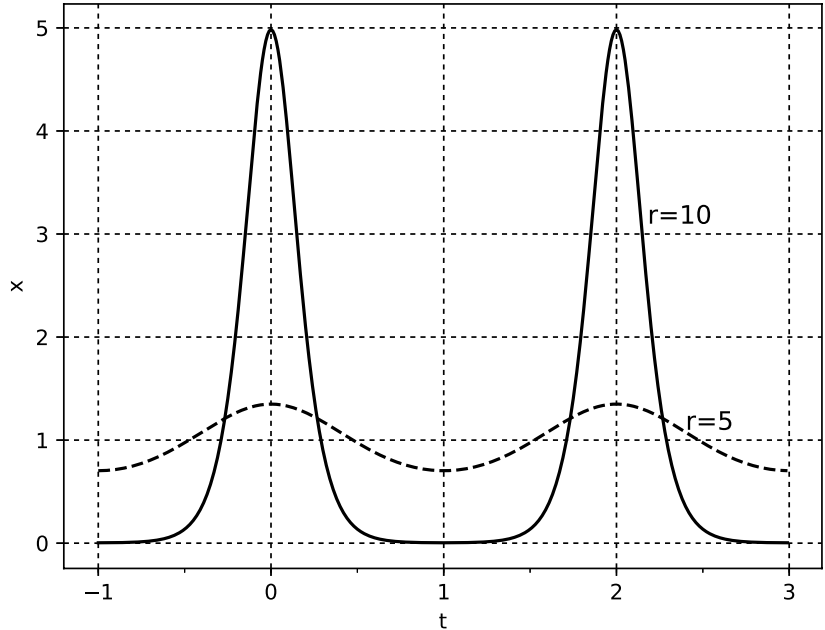}\caption{Time profile of the periodic solution for $r=5$ and $r=10$.}

\end{figure}

\section{Discussion}

In this paper we prove the existence of a periodic solution of the
delay differential equation (\ref{eq:dde1}). The periodic solution
satisfies the nonlinear ordinary differential equation (\ref{eq:fourdim}).
Since the system (\ref{eq:fourdim}) has conservative quantities,
the system (\ref{eq:fourdim}) is reduced to the Duffing equation
with surprise. Then we obtain the explicit solution in terms of the
Jacobi elliptic functions. 

Primarily, with Gabor Kiss and Gabriella Vas, the project has started
with finding a periodic solution of the delay differential equation
of the form
\begin{equation}
\frac{d}{dt}x(t)=rx(t)\left(1-\alpha x(t)-\int_{0}^{1}x(t-s)ds\right),\label{eq:inst}
\end{equation}
where $0\leq\alpha<1$. The delay differential equation (\ref{eq:inst})
arises from a mathematical model for disease transmission dynamics,
as it is explained in Appendix. Differently from the Wright equation,
the estimation of the non-delay term, together with the distributed
delay term, seems to be an obstacle, when one tries to construct a
Poincare map to find a periodic solution (cf. \cite{Kennedy:2014,Walther:1975,Walther:1977}).
Multiple periodic solutions seem to be possible for the SIRS model
in Appendix with the demographic turn-over \cite{TaylorCarr:2009}.
Multiplicity of the periodic solution is also shown for logistic equations
with multiple delays \cite{KissLessard:2012}. 

In this paper we study the delay differential equation (\ref{eq:dde1}),
setting $\alpha=0$ in the equation (\ref{eq:inst}) to simplify the
problem. Numerically we observe the periodic solution of period $2$
that attracts many positive solutions. Uniqueness and stability of
the periodic solution is an open problem, which are left for a future
work.

\subsection*{Acknowledgement}

The work has been initiated at the discussion with Prof. Hans-Otto
Walther, who kindly introduced his habilitation thesis to the author.
The author is grateful for his hospitality at the University of Giessen
in February 2016. The author thanks Gabriella Vas and Gabor Kiss for
a lot of discussions on the periodic solutions of delay differential
equations. The author also thanks Prof. Benjamin Kennedy and Prof.
Tibor Krisztine for the interest in the study. Finally the author
would like to thank Prof. Emiko Ishiwata, who kindly introduced the
area of integrable systems to the author. The author was supported
by JSPS Grant-in-Aid for Young Scientists (B) 16K20976 of Japan Society
for the Promotion of Science.

\appendix

\section{Appendix}

The delay differential equation (\ref{eq:dde1}) can be related to
an epidemic model that accounts for temporary immunity (\cite{BlyussKyrychko:2010,HethcoteStech=000026Driessche:1981,Goncaleves:2010,TaylorCarr:2009,Yuan:2014}).
Let us derive the delay differential equation (\ref{eq:dde1}) as
a limiting case of the following SIRS type epidemic model with temporary
immunity \begin{subequations}\label{eq:SIRS}
\begin{align}
\frac{d}{dt}S(t) & =-\beta S(t)I(t)+\gamma I(t-\tau),\label{eq:SIRS1}\\
\frac{d}{dt}I(t) & =\beta S(t)I(t)-\gamma I(t),\label{eq:SIRS2}\\
\frac{d}{dt}R(t) & =\gamma I(t)-\gamma I(t-\tau).\label{eq:SIRS3}
\end{align}
\end{subequations}where $S(t),\ I(t)$ and $R(t)$ respectively denote
the fraction of susceptible, infective and recovered populations at
time $t$. The model describes transition of susceptible, infective
and recovered populations. The model (\ref{eq:SIRS}) has three parameters:
transmission coefficient $\beta>0$, the recovery rate $\gamma>0$
and the immune period $\tau>0$. The initial condition is given as
follows 
\begin{align*}
S(0) & =S_{0}>0\\
I(s) & =\psi(s),\ s\in\left[-\tau,0\right]\\
R(0) & =\gamma\int_{0}^{\tau}\psi(-s)ds,
\end{align*}
where $\psi$ is a positive continuous function. We now require that
\[
S(0)+I(0)+R(0)=S_{0}+\psi(0)+\gamma\int_{0}^{\tau}\psi(-s)ds=1,
\]
so that 

\begin{equation}
S(t)+I(t)+R(t)=1,\ t\geq0\label{eq:Total}
\end{equation}
implying that the total population is constant. We also have the following
identity 
\begin{equation}
R(t)=\gamma\int_{0}^{\tau}I(t-s)ds,\ t\geq0.\label{eq:Recovered}
\end{equation}

From (\ref{eq:Total}) and (\ref{eq:Recovered}) we get 
\[
S(t)=1-I(t)-\gamma\int_{0}^{\tau}I(t-s)ds.
\]
Then from (\ref{eq:SIRS2}) we obtain the following scalar delay differential
equation 
\begin{equation}
\frac{d}{dt}I(t)=I(t)\left\{ \beta\left(1-I(t)-\gamma\int_{0}^{\tau}I(t-s)ds\right)-\gamma\right\} .\label{eq:Ieq}
\end{equation}
We let $x(t)=\frac{I(t)}{I_{e}},$ where $I_{e}$ is a nontrivial
equilibrium of (\ref{eq:Ieq}) given as 
\[
I_{e}=\frac{1-\frac{\gamma}{\beta}}{1+\gamma\tau}.
\]
It is assumed that $\beta>\gamma$ to ensure $I_{e}>0$. Considering
a nondimensional time so that the immune period is $1$, we obtain
\[
\frac{d}{dt}x(t)=\left(\beta-\gamma\right)x(t)\left(1-\frac{x(t)+\gamma\tau\int_{0}^{1}x(t-s)ds}{1+\gamma\tau}\right).
\]
We now fix $r=\beta-\gamma$ and let $\gamma\tau\to\infty$ to formally
obtain the equation (\ref{eq:dde1}). Local stability analysis for
(\ref{eq:Ieq}) can be found in \cite{HethcoteStech=000026Driessche:1981,Goncaleves:2010}.
See also \cite{Omori:2015} for the application of the mathematical
model to explain the periodic outbreak of a childhood disease.

\begin{thebibliography}{10}
\bibitem{Azevedo:2007}Azevedo, Katia A.G., Gadotti, Marta C., Ladeira,
Luiz A.C., Special symmetric periodic solutions of differential systems
with distributed delay. Nonlinear Anal. 67 (2007), no. 6, 1861\textendash 1869. 

\bibitem{BlyussKyrychko:2010}Blyuss, K.B., Kyrychko, Y.N., Stability
and bifurcations in an epidemic model with varying immunity period.
B. Math. Biol. 72, 490\textendash 505 (2010)

\bibitem{Byrd:1954}Byrd, P.F., Friedman, M.D., Handbook of Elliptic
Integrals for Engineers and Physicists. Springer Verlag, (1954)

\bibitem{Diekmann:1995}Diekmann, O., van Gils, S.A., Verduyn Lunel,
S.M., Walther, H.O., Delay Equations: Functional-, Complex- and Nonlinear
Analysis. Springer, New York (1995)

\bibitem{Dormayer:1986}Dormayer, P, Exact fomulae for periodic solutions
of $x^{\prime}(t+1)=\alpha(-x(t)+bx^{3}(t))$. J. Appl. Math. Phys.
37, 765\textendash 775 (1986)

\bibitem{Gopalsamy:1992}Gopalsamy, K., Stability and oscillations
in delay differential equations of population dynamics. Mathematics
and its Applications, 74. Kluwer Academic Publishers Group, Dordrecht
(1992)

\bibitem{Goncaleves:2010}Gon\c{c}alves, S., Guillermo, A., Gomes,
M.F.C., Oscillations in SIRS model with distributed delays. The European
Physical Journal B-Condensed Matter and Complex Systems 81.3 (2011)
363-371.

\bibitem{Gyori:2000}Gy\"ori, I. A new approach to the global asymptotic
stability problem in a delay Lotka-Volterra differential equation.
Mathematical and computer modelling 31.6-7 (2000): 9-28.

\bibitem{Hale:1993}Hale, J.K., Verduyn Lunel, S.M., Introduction
to Functional Differential Equations. Springer, New York (1993)

\bibitem{HethcoteStech=000026Driessche:1981}Hethcote, H. W., Stech,
H. W., van den Driessche, P. Nonlinear oscillations in epidemic models.
SIAM Journal on Applied Mathematics, 40(1) (1981) 1-9.

\bibitem{Jaquette:prep}Jaquette, J., A proof of Jones' conjecture.
Preprint

\bibitem{Jones:1962}Jones, G.S,, The existence of periodic solutions
of $f^{\prime}(x)=-\alpha f(x-1)\left\{ 1+f(x)\right\} $. J. Math.
Anal. Appl. 5, 435\textendash 450 (1962) 

\bibitem{Jones:1962-1}Jones, G. S., On the nonlinear differential-difference
equation $f^{\prime}(x)=-\alpha f(x-1)\left\{ 1+f(x)\right\} $. J.
Math. Anal. Appl. 4 (1962) 440\textendash 469

\bibitem{Kaplan=000026Yorke:1974}Kaplan, J.L., Yorke, J.A., Ordinary
differential equations which yield periodic solutions of differential
delay equations. J. Math. Anal. Appl. 48, 317\textendash 324 (1974) 

\bibitem{Kennedy:2014}Kennedy, B., Symmetric periodic solutions for
a class of differential delay equations with distributed delay. Electronic
Journal of Qualitative Theory of Differential Equations 2014.4 (2014):
1\textendash 18.

\bibitem{KissLessard:2012}Kiss, G., Lessard, J.P., Computational
fixed-point theory for differential delay equations with multiple
time lags, Journal of Differential Equations 252.4 (2012): 3093\textendash 3115.

\bibitem{Kovacic}Kovacic, I., Brennan, M.J., The Duffing Equation:
Nonlinear Oscillators and Their Behaviour, John Wiley \& Sons, Chichester,
2011.

\bibitem{Lessard:2010}Lessard, J.P., Recent advances about the uniqueness
of the slowly oscillating periodic solutions of Wright's equation.
J. Differ. Equ. 248, 992\textendash 1016 (2010)

\bibitem{Meyer:2001}Meyer, K.R., Jacobi elliptic functions from a
dynamical systems point of view. The American Mathematical Monthly
108.8 (2001): 729-737.

\bibitem{Nussbaum:1974}Nussbaum, R.D., Periodic solutions of some
nonlinear autonomous functional differential equations. Annali di
Matematica Pura ed Applicata 101.1 (1974): 263-306.

\bibitem{Nussbaum:1973}Nussbaum, R.D., Periodic solutions of some
nonlinear, autonomous functional differential equations. II. Journal
of Differential Equations 14.2 (1973): 360-394.

\bibitem{Oliveira:1985}Oliveira, J.C.F.D., Carvalho, L.A.V., A Lyapunov
Functional for a Retarded Differential Equation. SIAM J. Math. Anal.,
16 (6) (1985): 1295\textendash 1305.

\bibitem{Omori:2015}Omori, R., Nakata, Y., Tessmer, H.L., Suzuki,
S., Shibayama, K., The determinant of periodicity in Mycoplasma pneumoniae
incidence: an insight from mathematical modelling. Scientific reports,
5:14473 DOI:10.1038/srep14473 (2015)

\bibitem{Rand:2012}Rand, R.H., Lecture Notes on Nonlinear Vibrations.
2012. https://ecommons.cornell.edu/handle/1813/28989

\bibitem{Ramussen:2003}Rasmussen, H., Wake G.C., Donaldson, J., Analysis
of a class of distributed delay logistic differential equations. Mathematical
and Computer Modelling 38.1-2 (2003): 123-132.

\bibitem{Ruan:2006}Ruan, S., Delay differential equations in single
species dynamics, Delay Differential Equations and Applications 205
(2006): 477-517.

\bibitem{TaylorCarr:2009}Taylor, M. L., Carr, T. W., An SIR epidemic
model with partial temporary immunity modeled with delay. Journal
of Mathematical Biology, 59(6) (2009): 841-880.

\bibitem{vandenBerg:2017}van den Berg, J.B., Jaquette, J., A proof
of Wright\textquoteright s conjecture. Preprint arXiv:1704.00029,
2017.

\bibitem{Walther:1975}Walther, H.O., Existence of a non-constant
periodic solution of a non-linear autonomous functional differential
equation representing the growth of a single species population. Journal
of Mathematical Biology, 1.3 (1975): 227-240.

\bibitem{Walther:1977}Walther, H.O., \"Uber Ejektivit\"at und periodische
L\"osungen bei autonomen Funktionaldifferentialgleichungen mit verteilter
Verz\"ogerung. Habilitattionsschrift zur Erlangung der venia legendi
f\"ur des Fach Mathematik am Fachbereich Mathematik der Ludwig-Maximilians-Universit\"at
M\"unchen (1977)

\bibitem{Walther:2014}Walther, H.O., Topics in delay differential
equations. Jahresber. Dtsch. Math.-Ver. 116 (2014), no. 2, 87\textendash 114.

\bibitem{Wright:1955}Wright, E.M., A non-linear difference-differential
equation. J. Reine Angew. Math. 194 (1955) 66\textendash 87.

\bibitem{Yuan:2014}Yuan, Y., B\'elair, J. Threshold dynamics in
an SEIRS model with latency and temporary immunity. Journal of Mathematical
Biology, 69(4) (2014) 875-904.
\end{thebibliography}
\end{document}